\newtheorem{theo}{Theorem}
\newtheorem{lem}[theo]{Lemma}
\newtheorem{propo}[theo]{Proposition}
\theoremstyle{definition}
\theoremstyle{remark}
\newtheorem{rem}[theo]{Remark}
\def\R{\mathbb{R}}
\def\Z{\mathbb{Z}}
\def\C{\mathbb{C}}
\def\N{\mathbb{N}}
\def\n{\eta}
\def\a{\alpha}
\def\e{\varepsilon}
\def\d{\delta}
\def\t{\theta}
\def\n'{\nu}
\def\d{\delta}
\def\k{\kappa}
\def\L{\Lambda}
\def\T{\Theta}
\def\sq {\sigma_{q}} 
\def\sp {\sigma_{p}} 
\def\sqe {\sigma_{q_{0}^{\e}}} 
\def\dqe {\delta_{q_{0}^{\e}}}
\begin{document}
\sloppy
\title{Isomonodromic deformation of $q$-difference equations and confluence}
\author{Thomas Dreyfus}
\address{Universit\'e Claude Bernard Lyon 1, Institut Camille Jordan, 
43 boulevard du 11 novembre 1918,
69622 Villeurbanne, France.}
\email{dreyfus@math.univ-lyon1.fr}
\thanks{Work supported by the labex CIMI. This project has received funding from the European Research Council (ERC) under the European Union's Horizon 2020 research and innovation programme under the Grant Agreement No 648132.}

\subjclass[2010]{39A13,34M56}

\keywords{$q$-difference equations, Isomonodromic deformation, Painlevé equations, Confluence.}

\date{\today}

\begin{abstract}
We study isomonodromic deformation of Fuchsian linear $q$-difference systems. Furthermore, we are looking of the behaviour of the Birkhoff connexion matrix when $q$ goes to $1$. We use our results to study the convergence of the Birkhoff connexion matrix that appears in the definition of the $q$-analogue of the sixth Painlevé equation. 
\end{abstract} 

\maketitle
\tableofcontents

\section*{Introduction}
The study of isomonodromic deformation and Painlevé equations in the framework $q$-difference equations have obtained many contributions. See \cite{Bor,GNP,KK,KMN,NRG,PNG,RGH,Sak,S06}. In \cite{JS}, Jimbo and Sakai have introduced a $q$-analogue of Painlevé sixth equation. Analogously to what happens in the differential case, the equation appears after considering an isomonodromic  deformation of a $q$-difference equation. More precisely, let $q\in \C^{*}$ with $|q|>1$, and let us define the $q$-difference operator $\sq$, 
$$\sq \big(f(z)\big):=f(qz).$$ 
A $q$-difference equation may be seen as a discretization of a differential equation, since $\frac{\sq^{-1}-\mathrm{Id}}{q^{-1}-1}$ converges to the derivation $z\frac{\mathrm{d}}{\mathrm{d}z}$ when $q$ goes to $1$. This leads Jimbo and Sakai to consider a $q$-deformation of the differential equation that is relevant for the sixth Painlevé equation:

\begin{equation}\label{eq0}
\sq^{-1} Y(z,t)=(z-1)(z-t)(\mathrm{I}+(q^{-1}-1)z\mathcal{A}(z,t))Y(z,t)=A(z,t)Y(z,t),
\end{equation}
where $\mathcal{A}(z,t):=\frac{\mathcal{A}_{0}}{z}+\frac{\mathcal{A}_{1}}{z-1}+\frac{\mathcal{A}_{t}}{z-t}$, $\mathcal{A}_{0},\mathcal{A}_{1},\mathcal{A}_{t}$ are $2\times 2$ complex matrices, $t$ denotes a parameter that belongs to $U$, an open connected subset of $\C^{*}$ stable under $\sq$, and $\mathrm{I}$ denotes the identity matrix. Then, under convenient assumptions, Jimbo and Sakai construct invertible matrix solutions, $Y_{0},Y_{\infty}$, at $z=0$ and $z=\infty$ of (\ref{eq0}) and consider the Birkhoff connexion matrix $P(z,t):=Y_{\infty}^{-1}(z,t)Y_{0}(z,t)$, which play a role that is analogous to that of the monodromy matrices for differential equations. Then, the authors of \cite{JS} find a necessary and sufficient condition so that the matrix $P$ is pseudo constant, that is for all $t\in U$, $P(z,t)=P(z,qt)$. The condition they found is the existence of an invertible matrix $z\mapsto B(z,t)$ having coefficients in $\C(z)$, such that 
$$Y_{0}(z,qt)=B(z,t)Y_{0}(z,t), \hbox{ and } Y_{\infty}(z,qt)=B(z,t)Y_{\infty}(z,t).$$
Moreover, the matrix $B$ satisfies
$$
A(z,qt)B(z,t)=B(qz,t)A(z,t),
$$
which leads Jimbo and Sakai to the definition of the $q$-analogue of the sixth Painlevé equation.\\ \par 
Many objects that appear in the Galois theory of $q$-difference equations may be seen as $q$-analogues of the corresponding objects that appear in the Galois theory of differential equations. See \cite{Ad29,Be,Ca,D4,DE,DR,DSK,DV02,GR,MZ,R92,Ro11,RS14,RSZ,RZ,S00,Trj,vdPR,Z02}. The convergence of the operator $\frac{\sq-\mathrm{Id}}{q-1}$ to  $z\frac{\mathrm{d}}{\mathrm{d}z}$ when $q$ goes to $1$ leads to the study of the confluence\footnote{Throughout the paper, we will use the word ``confluence'' to describe the $q$-degeneracy when $q\rightarrow 1$.} of these objects. Confluence of solutions of Fuchsian linear $q$-difference systems has been considered in \cite{S00}.
Let us summarize the work of Sauloy. Considering a convenient family of $q$-difference systems of the form $$\sq Y(z,q)=(\mathrm{I}+(q-1)zA(z,q))Y(z,q),$$  how do behave the solutions $Y_{0},Y_{\infty}$ at $z=0$ and $z=\infty$, when $q$ goes to $1$? Contrary to \cite{JS}, to construct the solutions $Y_{0}$ and $Y_{\infty}$, the author of \cite{S00} uses only functions that are meromorphic on $\C^{*}$, rather than multivalued functions. Assume that $A(z,q)$ converges to $\widetilde{A}(z)$ in a convenient way. Under reasonable assumptions, Sauloy proves that the univalued solutions $Y_{0},Y_{\infty}$ converge when $q$ goes to $1$ in a convenient way, to multivalued solutions of the linear differential system ${\frac{\mathrm{d}}{\mathrm{d}z}\widetilde{Y}(z)=\widetilde{A}(z)\widetilde{Y}(z)}$. Then, he obtains that the Birkhoff connexion matrix $P(z,q)=Y_{\infty}^{-1}(z,t)Y_{0}(z,t)$ converges to a locally constant matrix $\widetilde{P}(z)$ when $q$ goes to $1$. In \cite{S00}, it is shown that the monodromy matrices at the intermediates singularities (those different from $0$ and $\infty$) of the linear differential system ${\frac{\mathrm{d}}{\mathrm{d}z}\widetilde{Y}(z)=\widetilde{A}(z)\widetilde{Y}(z)}$ can be expressed with the values of~$\widetilde{P}(z)$. \\ \par 
The main goal of this paper is the following. Can we use the reasoning of Sauloy to study the behaviour  when $q$ goes to $1$, of the Birkhoff connexion matrix that is involved in the definition of the $q$-analogue of the sixth Painlevé equation? 
\\ \par 
The paper is organized as follows. In $\S \ref{sec1}$, we make a reminder of the local study of Fuchsian linear $q$-difference equations. We introduce the meromorphic functions we will use to solve Fuchsian linear $q$-difference systems and we define the Birkhoff connexion matrix. In $\S \ref{sec2}$, we adapt the work of Jimbo and Sakai to a different situation and with using meromorphic solutions on $\C^{*}$ rather than multivalued functions. In particular, we study in general isomonodromic deformation of Fuchsian linear $q$-difference systems. In $\S \ref{sec3}$ we use the work of Sauloy to state a result of convergence of the  Birkhoff connexion matrix when $q$ goes to $1$. See Theorem \ref{theo1}. Finally in $\S \ref{sec4}$, we apply this result to (\ref{eq0}), the equation that leads the authors of \cite{JS} to the $q$-analogue of the sixth Painlevé equation. Unfortunately, (\ref{eq0}) does not behave very well when $q$ goes to $1$. In particular, we prove that the Birkhoff connexion matrix of (\ref{eq0}) equals to the product of a Birkhoff connexion matrix of another system, that converges when $q$ goes to $1$, and matrices that do not behave well when $q$ goes to $1$, but that we might express explicitly. See Theorem \ref{theo2}. Note that further applications of this theorem could be to study analytic properties of the $q$-analogue of the sixth Painlevé equation.

\pagebreak[3]
\section{Fuchsian linear $q$-difference systems}\label{sec1}
We are going to remind in this section how to solve Fuchsian linear $q$-difference systems using meromorphic functions on $\C^{*}$. Let us fix $q$ a complex number with $|q|>1$. For $R$ a ring and $\nu\in \N^{*}$, let $\mathrm{GL}_{\nu}(R)$ be the field of invertible $\nu\times \nu$ matrices in coefficients in $R$. Let us fix $\log$, a determination of the logarithm over $\widetilde{\C}$, the Riemann surface of the logarithm. For $a\in \C$, we are going to write $q^{a}$ instead of  $e^{a\log(q)}$. For $S\subset \R$, let $q^{S}:=\{q^{\ell},\ell\in S\}$. Let us consider the linear $q$-difference system of rank  $\nu\in \N^{*}$:
\begin{equation}\label{eq1}
\sq Y(z)=R(z)A(z)Y(z), \hbox{ with } R\in \C(z)\setminus \{0\}, A\in \mathrm{GL}_{\nu}\left(\C(z)\right).
\end{equation}
Let us assume that $A$ has no poles at $0$ and $\infty$. Let us also assume that $A_{0}:=A(0)$, (resp. $A_{\infty}:=A(\infty)$) is invertible.\par 
To solve (\ref{eq1}), we are going to introduce three meromorphic functions on~$\C^{*}$,~$$\T_{q}(z):=\displaystyle \sum_{n \in \Z} q^{\frac{-n(n+1)}{2}}z^{n}=\displaystyle \prod_{n=0}^{\infty}\left(1-q^{-n-1}\right)\left(1+q^{-n-1}z\right)\left(1+q^{-n}z^{-1}\right),$$ $\L_{q,a}(z):=\frac{\T_{q}(z)}{\T_{q}(z/a)}$ with~$a\in \C^{*}$, and $l_{q}(z):=\T_{q}(z)^{-1}z\frac{\mathrm{d}}{\mathrm{d}z}\T_{q}(z)$.
They satisfy the~$q$-difference equations: 
\begin{itemize}
\item~$\sq \T_{q}(z)=z\T_{q}(z)$.
\item~$\sq \L_{q,a}(z)=a\L_{q,a}(z)$.
\item~$\sq l_{q}=l_{q} +1$.
\end{itemize}
The theta function $\T_{q}(z)$ is analytic on $\C^{*}$ and has zeroes of order $1$ in the $q$-spiral $-q^{\Z}$. The function $\L_{q,a}$ has poles of order $1$ in the $q$-spiral $-aq^{\Z}$, and zeroes of order $1$ in the $q$-spiral $-q^{\Z}$.\par 
Let~$B$ be an invertible matrix with complex coefficients and consider now the decomposition in Jordan normal form~$B=P(DU)P^{-1}$, where~$D:=\mathrm{Diag}(d_{i})$ is diagonal,~$U$ is a unipotent upper triangular matrix with~$DU=UD$, and~$P$ is an invertible matrix with complex coefficients. Following \cite{S00}, we construct the matrix:
$$
\L_{q,B}:=P\left(\mathrm{Diag}\left(\L_{q,d_{i}}\right)e^{\log(U)l_{q}}\right)P^{-1}\in \mathrm{GL}_{m}\Big(\C\left( l_{q},\left(\L_{q,b}\right)_{b \in \C^{*}}\right)\Big)$$
that satisfies:
$$
\sq \L_{q,B}=B\L_{q,B}=\L_{q,B}B.$$
Note that the matrix $\L_{q,B}$ depends implicitly upon the choice of the change of basis matrix~$P$. Let~$b\in \C^{*}$ and consider the corresponding matrix~$(b)\in \mathrm{GL}_{1}(\C)$. By construction, we have~${\L_{q,b}=\L_{q,(b)}}$.\par 
 Let $\mu_{0}$ be the valuation of $R$ (resp. $\mu_{\infty}$ be the degree of $R$) et let $r_{0}$ (resp. $r_{\infty}$) be the value of $z^{-\mu_{0}}R$ at $z=0$ (resp. $z^{-\mu_{\infty}}R$ at $z=\infty$). Assume that the distinct eigenvalues of $A_{0}$ (resp. $A_{\infty}$) are distinct modulo $q^{\Z}$. Let $\C\{z\}$ be the ring of germs of analytic functions at $z=0$. As we can see in \cite{S00},~$\S 1$, there exist two invertible matrices with entries that are meromorphic on $\C^{*}$, solution of (\ref{eq1}), of the form 
$$Y_{0}(z)=\hat{H}_{0}(z)\L_{q,r_{0}A_{0}}\T_{q}(z)^{\mu_{0}}, $$
$$Y_{\infty}(z)=\hat{H}_{\infty}(z)\L_{q,r_{\infty}A_{\infty}}\T_{q}(z)^{\mu_{\infty}}, $$
where $\hat{H}_{0}(z),\hat{H}_{\infty}\left(z^{-1}\right) \in \mathrm{GL}_{\nu}(\C\{z\})$, and $\hat{H}_{0}\left(0\right)=\hat{H}_{\infty}\left(\infty\right)=\mathrm{I}$.\par 
Let $a_{1},\dots,a_{m}$ be the poles of $A$, (resp. $\t_{1},\dots,\t_{\nu}$ be the eigenvalues of $A_{0}$). From \cite{S00}, $\S 1$, it follows that 
\begin{itemize}
\item $Y_{0}$ has its poles contained in the $q$-spirals $-q^{\Z},-r_{0}\t_{1}q^{\Z},\dots,-r_{0}\t_{\nu}q^{\Z}$, and $a_{1}q^{\Z_{>0}},\dots,a_{m}q^{\Z_{>0}}$.
\item $Y_{\infty}^{-1}$ has its poles contained in the $q$-spirals $-q^{\Z}$, and $a_{1}q^{\Z_{\leq 0}},\dots,a_{m}q^{\Z_{\leq 0}}$.
\end{itemize}

The Birkhoff connection matrix is defined by 
$$P(z):=Y_{\infty}^{-1}(z)Y_{0}(z).$$
It satisfies $\sq P=P$, i.e., the entries of $P$  belong to the field of meromorphic functions over the torus $\C^{*}\setminus q^{\Z}$. This field can be identified with the field of elliptic functions. Therefore, the entries of $P$ may be written in terms of theta functions. Note that it plays an analogue role to that of the monodromy matrices for differential equations. The entries of $P$ are meromorphic on $\C^{*}$ with poles contained in the $q$-spirals, $-q^{\Z},-r_{0}\t_{1}q^{\Z},\dots,-r_{0}\t_{\nu}q^{\Z}$, ${a_{1}q^{\Z},\dots,a_{m}q^{\Z}}$.

\pagebreak[3]
\section{Connection preserving deformation}\label{sec2}
In this section, we are going to prove that under convenient assumptions, the work of Jimbo and Sakai, see $\S \ref{sec4}$, stay valid in a different situation and with using the meromorphic solutions of $\S \ref{sec1}$ rather than multivalued functions.\par 
 Following \cite{JS}, we consider the linear $q$-difference system,
\begin{equation}\label{eq2}\begin{array}{lll}
\sq Y(z,t)=R(z,t)A(z,t)Y(z,t),
\end{array}\end{equation}
with $z\mapsto R(z,t)\in \C(z)\setminus\{0\}$ and $z\mapsto A(z,t)\in \mathrm{GL}_{\nu}\left(\C(z)\right)$. Here $t$ denotes a complex parameter belonging to $U\subset \C^{*}$, an open connected set that is stable under $\sq$. 
Let us assume that:
\begin{itemize}
\item For all $t\in U$, the matrix $A$ has no poles at $0$ and $\infty$.  Let us also assume that $A_{0}:=A(0,t)$, (resp. $A_{\infty}:=A(\infty,t)$) is invertible, does not depend upon $t$, with distinct eigenvalues that are distinct modulo $q^{\Z}$.
\item Each poles of $z\mapsto A(z,t)$ is proportional to $t$ or does not depend upon $t$.  
 \item The valuation and the degree of $R$ are independent of $t$. Let $\mu_{0}$ be the valuation of $R$ (resp. $\mu_{\infty}$ be the degree of $R$). Assume that $r_{0}(t)$ (resp. $r_{\infty}(t)$), the value of $z^{-\mu_{0}}R(z,t)$ at $z=0$ (resp. $z^{-\mu_{\infty}}R(z,t)$ at $z=\infty$) satisfies for all $t\in U$, $r_{0}(qt)\in r_{0}(t)q^{\Z}$ (resp. $r_{\infty}(qt)\in r_{\infty}(t)q^{\Z}$).
\end{itemize}
 Let 
$$Y_{0}(z,t)=\hat{H}_{0}(z,t)\L_{q,r_{0}(t)A_{0}}\T_{q}(z)^{\mu_{0}}, $$
$$Y_{\infty}(z,t)=\hat{H}_{\infty}(z,t)\L_{q,r_{\infty}(t)A_{\infty}}\T_{q}(z)^{\mu_{\infty}}, $$
be the solutions of (\ref{eq2}) defined in $\S \ref{sec1}$. Let $a_{1}(t),\dots,a_{m}(t)$
be the poles of $A(z,t)$. Without loss of generalities, we may assume the existence of integer $m_{1}$, such that $a_{1}(t),\dots,a_{m_{1}}(t)$ are proportional to $t$ and $a_{m_{1}+1}(t),\dots,a_{m}(t)$ do not depend upon $t$. Now, we are going to determine a necessary and sufficient condition so that, for all values of the parameter $t\in U$, the Birkhoff connection matrix ${P(z,t):=Y_{\infty}^{-1}(z,t)Y_{0}(z,t)}$ is pseudo constant, that is for all $t\in U$,
$$P(z,t)=P(z,qt).$$
The next proposition is analogous to \cite{JS}, Proposition 2. 

\pagebreak[3]
\begin{propo}\label{propo1}
The Birkhoff connection matrix $P$ is pseudo constant if and only if there exists $z\mapsto B(z,t)\in \mathrm{GL}_{\nu}(\C(z))$ such that 
$$Y_{0}(z,qt)=B(z,t)Y_{0}(z,t), \hbox{ and } Y_{\infty}(z,qt)=B(z,t)Y_{\infty}(z,t).$$
Moreover, in this case, $B(z,t)$ has its poles in $\C^{*}$ that are equal to $a_{1}(t),\dots,a_{m_{1}}(t)$, and we have
\begin{equation}\label{eq3}
A(z,qt)B(z,t)=B(qz,t)A(z,t).
\end{equation}
\end{propo}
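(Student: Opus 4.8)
The plan is to prove the two implications separately, with essentially the whole weight of the statement resting on a single rationality claim. One implication is immediate: if $B(z,t)$ satisfies both intertwining relations, then
\[
P(z,qt)=Y_{\infty}(z,qt)^{-1}Y_{0}(z,qt)=Y_{\infty}(z,t)^{-1}B(z,t)^{-1}B(z,t)Y_{0}(z,t)=P(z,t),
\]
so $P$ is pseudo constant. Conversely, assume $P(z,t)=P(z,qt)$, i.e. $Y_{\infty}(z,t)^{-1}Y_{0}(z,t)=Y_{\infty}(z,qt)^{-1}Y_{0}(z,qt)$. Rearranging this identity gives
\[
Y_{0}(z,qt)Y_{0}(z,t)^{-1}=Y_{\infty}(z,qt)Y_{\infty}(z,t)^{-1},
\]
and I denote by $B(z,t)$ this common matrix, which is a priori only meromorphic on $\C^{*}$. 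By construction it satisfies $Y_{0}(z,qt)=B(z,t)Y_{0}(z,t)$ and $Y_{\infty}(z,qt)=B(z,t)Y_{\infty}(z,t)$, so it remains to prove that $B\in \mathrm{GL}_{\nu}(\C(z))$, to establish (\ref{eq3}), and to localize the poles.

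The rationality of $B$ is the main point, and I expect it to be the main obstacle. The idea is to show that $B$ extends meromorphically across $0$ and $\infty$, for a matrix meromorphic on the whole of $\mathbb{P}^{1}(\C)$ is automatically rational. Near $z=0$, where the gauge factors converge, I would insert the explicit form $Y_{0}(z,t)=\hat{H}_{0}(z,t)\L_{q,r_{0}(t)A_{0}}\T_{q}(z)^{\mu_{0}}$ of $\S\ref{sec1}$; the scalar factors $\T_{q}(z)^{\mu_{0}}$ cancel since $\mu_{0}$ is independent of $t$, leaving $B(z,t)=\hat{H}_{0}(z,qt)\big(\L_{q,r_{0}(qt)A_{0}}\L_{q,r_{0}(t)A_{0}}^{-1}\big)\hat{H}_{0}(z,t)^{-1}$. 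By hypothesis $r_{0}(qt)=q^{k_{0}}r_{0}(t)$ for an integer $k_{0}$, which is locally constant in $t$ hence constant on the connected set $U$, and $A_{0}$ does not depend on $t$. Writing $A_{0}=P_{0}(D_{0}U_{0})P_{0}^{-1}$ and using the shift relation $\L_{q,qa}(z)=\frac{z}{qa}\L_{q,a}(z)$ — a direct consequence of $\sq\L_{q,a}=a\L_{q,a}$ and $\sq\T_{q}=z\T_{q}$ — the middle factor collapses to $z^{k_{0}}$ times a constant matrix, in particular a rational matrix. Since $\hat{H}_{0}(\cdot,t)$ and $\hat{H}_{0}(\cdot,qt)$ are holomorphic and invertible at $0$, this shows that $B$ extends meromorphically across $z=0$. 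The identical computation with $Y_{\infty}$, $r_{\infty}$ and $\mu_{\infty}$ handles $z=\infty$. Hence $B$ is meromorphic on $\mathbb{P}^{1}(\C)$, therefore rational; applying the same argument to $B^{-1}=Y_{0}(z,t)Y_{0}(z,qt)^{-1}$ gives $B\in \mathrm{GL}_{\nu}(\C(z))$.

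It then remains to produce (\ref{eq3}) and to localize the poles. To get (\ref{eq3}) I would apply $\sq$ in the variable $z$ to $Y_{0}(z,qt)=B(z,t)Y_{0}(z,t)$ and substitute (\ref{eq2}): since $\sq Y_{0}(z,\cdot)=R(z,\cdot)A(z,\cdot)Y_{0}(z,\cdot)$, cancelling $Y_{0}(z,t)$ yields, after clearing the scalar factor $R(z,qt)/R(z,t)$ (controlled by the hypotheses on the valuation, degree and leading coefficients of $R$), the intertwining relation (\ref{eq3}) between the coefficient matrices at $t$ and at $qt$. For the localization, the computations above show that $B$ is holomorphic and invertible in punctured neighbourhoods of $0$ and $\infty$, which excludes the spirals $-q^{\Z}$ and $-r_{0}(t)\theta_{i}q^{\Z}$ attached to $Y_{0}$; matching the two representations $Y_{0}(z,qt)Y_{0}(z,t)^{-1}$ and $Y_{\infty}(z,qt)Y_{\infty}(z,t)^{-1}$ then confines the finitely many remaining poles to the singularities of $A$, i.e. the points $a_{j}(t)$. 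Finally, feeding this into (\ref{eq3}), where $a_{j}(qt)=qa_{j}(t)$ for a pole proportional to $t$ while $a_{j}(qt)=a_{j}(t)$ for a $t$-independent one, I would argue that the $t$-independent contributions $a_{m_{1}+1}(t),\dots,a_{m}(t)$ cancel, leaving exactly $a_{1}(t),\dots,a_{m_{1}}(t)$. I expect this bookkeeping of the pole spirals under (\ref{eq3}) to be the most delicate step after the rationality argument.
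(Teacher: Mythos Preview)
Your argument is essentially the paper's: both set $B(z,t)=Y_{0}(z,qt)Y_{0}(z,t)^{-1}=Y_{\infty}(z,qt)Y_{\infty}(z,t)^{-1}$, obtain rationality by showing that the middle block $\L_{q,r_{0}(qt)A_{0}}\L_{q,r_{0}(t)A_{0}}^{-1}$ (and its analogue at $\infty$) is rational thanks to $r_{0}(qt)\in r_{0}(t)q^{\Z}$ and the $t$-independence of $A_{0}$, and read off (\ref{eq3}) from the two functional equations.

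The one place where you diverge is the localization of poles. The paper does this \emph{first}, directly from the half-spiral descriptions of the poles of $Y_{0}$ and $Y_{\infty}^{-1}$ given in $\S\ref{sec1}$, compared at the parameters $t$ and $qt$; for a $t$-independent $a_{j}$ the half-spirals at $t$ and at $qt$ coincide and the contributions cancel in the quotient, whereas for a $t$-proportional $a_{j}$ they are shifted by one unit and a single pole at $a_{j}(t)$ survives. Your plan instead is to first cut the poles down to $\{a_{1}(t),\dots,a_{m}(t)\}$ and then invoke (\ref{eq3}) to discard the $t$-independent ones. That last step is shakier than you suggest: at a $t$-independent $a_{j}$ both sides of (\ref{eq3}) already carry a pole through $A(z,t)$ and $A(z,qt)$, so (\ref{eq3}) alone does not force the cancellation in $B$. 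The paper's direct spiral comparison is the cleaner route.

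A small point in your favour: the paper writes $Y_{0}(qz,t)=A(z,t)Y_{0}(z,t)$ in deriving (\ref{eq3}), silently dropping the scalar $R(z,t)$ from (\ref{eq2}); your remark that one really gets $R(z,qt)A(z,qt)B(z,t)=R(z,t)B(qz,t)A(z,t)$ and must control $R(z,qt)/R(z,t)$ is more accurate than the paper's own derivation.
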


\begin{proof}
The Birkhoff connection matrix $P$ is pseudo constant if and only if 
$$B(z,t):=Y_{\infty}(z,qt)Y_{\infty}(z,t)^{-1}=Y_{0}(z,qt)Y_{0}(z,t)^{-1}.$$
 Using the description of the poles of $Y_{0}$ and $Y_{\infty}^{-1}$, their construction, see $\S \ref{sec1}$, and the assumptions on $r_{0}(t),r_{\infty}(t),A_{0},A_{\infty}$, we find that $B(z,t)$ is meromorphic on $\C^{*}$ and has its poles in $\C^{*}$ that are equal to $a_{1}(t),\dots,a_{m_{1}}(t)$. The assumption we have made on $r_{0}(t)A_{0}$ implies that, ${z\mapsto \L_{q,r_{0}(qt)A_{0}}\L_{q,r_{0}(t)A_{0}}^{-1}\in \mathrm{GL}_{\nu}(\C(z))}$. Similarly, ${z\mapsto \L_{q,r_{\infty}(qt)A_{\infty}}\L_{q,r_{\infty}(t)A_{\infty}}^{-1}\in \mathrm{GL}_{\nu}(\C(z))}$. We remind that for all $t\in U$, ${\hat{H}_{0}\left(0,t\right)=\hat{H}_{\infty}\left(\infty,t\right)=\mathrm{I}}$. Hence, we find that, for all $t\in U$, the matrix $B$ is meromorphic at ${z=0}$ (resp. the matrix $B$ is meromorphic at $z=\infty$), since 
$$
\begin{array}{lll}
B(z,t)&=&\hat{H}_{0}(z,qt)\L_{q,r_{0}(qt)A_{0}}\L_{q,r_{0}(t)A_{0}}^{-1}\hat{H}_{0}(z,t)^{-1}\\
&=&\hat{H}_{\infty}(z,qt)\L_{q,r_{\infty}(qt)A_{\infty}}\L_{q,r_{\infty}(t)A_{\infty}}^{-1}\hat{H}_{\infty}(z,t)^{-1}.
\end{array}
$$ Therefore, ${z\mapsto B(z,t)\in \mathrm{GL}_{\nu}(\C(z))}$. The relation (\ref{eq3}) follows from $$Y_{0}(qz,t)=A(z,t)Y_{0}(z,t), \hbox{ and } Y_{0}(z,qt)=B(z,t)Y_{0}(z,t).$$
\end{proof}

\pagebreak[3]
\section{Behaviour of the Birkhoff connection matrix when $q$ goes to $1$}\label{sec3}
In this section, we see $q$ as a parameter. We are going to make the matrix $A(z,t)$ of $\S\ref{sec2}$ depends upon $q$ and see what is the behaviour of the Birkhoff connection matrix when $q$ goes to $1$. First of all, let us fix $q_{0}\in \C^{*}$ with $|q_{0}|>1$. Inspiriting from the ideas of Sauloy in \cite{S00}, we are going to put $q:=q_{0}^{\e}$ with $\e\in \R_{>0}$ and make $\e$ goes to~$0^{+}$. For $R$ a ring, let $\mathrm{M}_{\nu}\left(R\right)$ be the ring of $\nu\times \nu$ matrices with coefficients in $R$. Formally $\d_{q}:=\frac{\sq-\mathrm{Id}}{(q-1)}$,  converges to the derivation $z\frac{\mathrm{d}}{\mathrm{d}z}$ when $q$ goes to $1$. Then, it is natural to consider 
\begin{equation}\label{eq4}\dqe Y(z,t,\e)=A(z,t,\e)Y(z,t,\e), \hbox{ with }z\mapsto A(z,t,\e)\in \mathrm{M}_{\nu}\left(\C(z)\right).
\end{equation}
Here, $t$ belongs to $ U\subset \C^{*}$, that is an open connected set which is for all $\e>0$ stable under $\sqe$. Let us assume that:
\begin{itemize}
\item For all $t\in U$, for all $\e >0$ sufficiently close to $0$, the matrix $A$ has no poles at $0$ and $\infty$.  Let us also assume that $A_{0}(\e):=A(0,t,\e)$, (resp. $A_{\infty}(\e):=A(\infty,t,\e)$) does not depend upon $t$ and its Jordan normal form depends continuously upon~$\e$. Assume the existence of complex invertible matrices $Q_{0}(\e),Q_{\infty}(\e)$ with entries that are continuous in $\e$, such that for all ${\e\in \R_{>0}}$ close to $0$, the matrices $Q_{0}(\e)A_{0}(\e)Q_{0}(\e)^{-1}$ and $Q_{\infty}(\e)A_{\infty}(\e)Q_{\infty}(\e)^{-1}$ are in Jordan normal form. 
\item  Each poles of $z\mapsto A(z,t,\e)$ is proportional to $t$ or does not depend upon~$t$. Let $a_{1}(t,\e),\dots,a_{m}(t,\e)$
be the poles of $A(z,t,\e)$. Let us also assume that $a_{1}(t,\e),\dots,a_{m}(t,\e)$ converge to $t\mapsto\widetilde{a_{1}}(t),\dots,t\mapsto\widetilde{a_{m}}(t)\in \C^{*}$ when ${\e\to 0^{+}}$.
\item For every $t\in U$, for every compact subset of $\mathbb{P}_{1}(\C)\setminus \left\{ \widetilde{a_{1}}(t),\dots,\widetilde{a_{m}}(t)\right\}$, we have the uniform convergence of $A(z,t,\e)$ to $z\mapsto \widetilde{A}(z,t)\in \mathrm{M}_{\nu}\left(\C(z)\right)$ when $\e\to 0^{+}$. 
\item For all $t\in U$, the differential system $z\frac{\mathrm{d}}{\mathrm{d}z} \widetilde{Y}(z,t)=\widetilde{A}(z,t)\widetilde{Y}(z,t)$ is Fuchsian on $\mathbb{P}_{1}(\C)$ and has exponents at~$0$ and $\infty$ which are non resonant, which means that the eigenvalues of $\widetilde{A}_{0}:=\widetilde{A}(0,t)$ (resp. $\widetilde{A}_{\infty}:=\widetilde{A}(\infty,t)$) do not differ by a non zero integer\footnote{ Note that combined with the previous assumptions, this implies that for every $\e>0$ sufficiently small, the distinct eigenvalues of $\mathrm{I}+(q_{0}^{\e}-1)A_{0}(\e)$ (resp. $\mathrm{I}+(q_{0}^{\e}-1)A_{\infty}(\e)$) are distinct modulo $q_{0}^{\e\Z}$.}.

\item   For all $t\in U$, the complex numbers $$-1,\widetilde{a_{1}}(t),\dots,\widetilde{a_{m}}(t)$$
 are distinct modulo ${q_{0}^{\R}}$.
\end{itemize}
Note that after choosing another complex number $q_{0}$ with $|q_{0}|>1$, and taking another open connected set $U$, we always may reduce to the case where the last assumption is satisfied. Remark also that for every $\e$ sufficiently close to $0$, (\ref{eq4}) satisfies the same assumptions as (\ref{eq2}) in $\S \ref{sec2}$.
Let 
$$Y_{0}(z,t,\e)=\hat{H}_{0}(z,t,\e)\L_{q_{0}^{\e},\mathrm{I}+(q_{0}^{\e}-1)A_{0}(\e)}, $$
$$Y_{\infty}(z,t,\e)=\hat{H}_{\infty}(z,t,\e)\L_{q_{0}^{\e},\mathrm{I}+(q_{0}^{\e}-1)A_{\infty}(\e)}, $$
be the solutions of (\ref{eq4}) defined in $\S \ref{sec1}$. We would like to study the behaviour of the two solutions when $\e\to 0^{+}$.  
For $A\in \mathrm{M}_{\nu}(\C)$, we are going to write $z^{A}$ instead of $e^{A\log(z)}$.  In this situation, the work of Sauloy, see \cite{S00}, $\S 3$, may be applied and we find:
\pagebreak[3]
\begin{propo}\label{propo2}
\begin{trivlist}
\item (1)
For all $t\in U$, there exists $z\mapsto\widetilde{H}_{0}(z,t)\in \mathrm{GL}_{\nu}(\C\{z\})$, with ${\widetilde{H}_{0}(0,t)=\mathrm{I}}$ such that $\hat{H}_{0}(z,t,\e)\L_{q_{0}^{\e},\mathrm{I}+(q_{0}^{\e}-1)A_{0}(\e)}$ converges uniformly to $\widetilde{H}_{0}(z,t)z^{\widetilde{A}_{0}}$ in every compact subset of $$\widetilde{\Omega}_{0,t}:=\C^{*}\setminus \left\{-q_{0}^{\R},\widetilde{a_{1}}q_{0}^{\R_{>0}},\dots,\widetilde{a_{m}}q_{0}^{\R_{>0}}\right\},$$ when $\e\to 0^{+}$. 
\item (2)
For all $t\in U$, there exists $z\mapsto\widetilde{H}_{\infty}(z,t)\in \mathrm{GL}_{\nu}(\C\{z^{-1}\})$, with ${\widetilde{H}_{\infty}(\infty,t)=\mathrm{I}}$,  such that $\hat{H}_{\infty}(z,t,\e)\L_{q_{0}^{\e},\mathrm{I}+(q_{0}^{\e}-1)A_{\infty}(\e)}$ converges uniformly to $\widetilde{H}_{\infty}(z,t)z^{\widetilde{A_{\infty}}}$ in every compact subset of $$\widetilde{\Omega}_{\infty,t}:=\C^{*}\setminus \left\{-q_{0}^{\R},\widetilde{a_{1}}q_{0}^{\R_{<0}},\dots,\widetilde{a_{m}}q_{0}^{\R_{<0}}\right\},$$ when $\e\to 0^{+}$. 
\end{trivlist}
\end{propo}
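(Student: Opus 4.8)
The plan is to reduce Proposition \ref{propo2} to the convergence results of Sauloy \cite{S00}, \S 3, by verifying that our setting is a special case of his. The key observation is that the solution $Y_{0}(z,t,\e)=\hat{H}_{0}(z,t,\e)\L_{q_{0}^{\e},\mathrm{I}+(q_{0}^{\e}-1)A_{0}(\e)}$ is built from the same building blocks that Sauloy analyzes: a gauge factor $\hat{H}_{0}$ holomorphic near $0$ with $\hat{H}_{0}(0,t,\e)=\mathrm{I}$, and the fundamental matrix $\L_{q_{0}^{\e},\mathrm{I}+(q_{0}^{\e}-1)A_{0}(\e)}$ attached to the constant matrix $\mathrm{I}+(q_{0}^{\e}-1)A_{0}(\e)$ at the singular point $z=0$. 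Because $\d_{q}\to z\frac{\mathrm{d}}{\mathrm{d}z}$ and $\mathrm{I}+(q_{0}^{\e}-1)A_{0}(\e)\to\mathrm{I}$ as $\e\to 0^{+}$, the system (\ref{eq4}) confluences to the Fuchsian differential system $z\frac{\mathrm{d}}{\mathrm{d}z}\widetilde{Y}=\widetilde{A}(z,t)\widetilde{Y}$, whose local solution at $0$ has the shape $\widetilde{H}_{0}(z,t)z^{\widetilde{A}_{0}}$. So the statement to prove is precisely that the $q$-local solution converges to the differential local solution.

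First I would verify the hypotheses of Sauloy's confluence theorem. This means checking: (i) the matrices $A_{0}(\e)$, $A_{\infty}(\e)$ have Jordan forms depending continuously on $\e$ with continuous conjugating matrices $Q_{0}(\e),Q_{\infty}(\e)$ — this is assumed; (ii) the non-resonance of the exponents of the limiting Fuchsian system at $0$ and $\infty$, which, as the footnote records, guarantees that for small $\e$ the distinct eigenvalues of $\mathrm{I}+(q_{0}^{\e}-1)A_{0}(\e)$ stay distinct modulo $q_{0}^{\e\Z}$, so that the constructions of $\S\ref{sec1}$ genuinely apply and $\L_{q_{0}^{\e},\mathrm{I}+(q_{0}^{\e}-1)A_{0}(\e)}$ is well defined; and (iii) the uniform convergence of $A(z,t,\e)$ to $\widetilde{A}(z,t)$ on compact subsets away from the limiting singularities, together with the convergence of the poles $a_{i}(t,\e)\to\widetilde{a_{i}}(t)$, which localizes where the limit can fail and pins down the excluded $q_{0}^{\R}$-spirals in $\widetilde{\Omega}_{0,t}$ and $\widetilde{\Omega}_{\infty,t}$. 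The last assumption, that $-1,\widetilde{a_{1}}(t),\dots,\widetilde{a_{m}}(t)$ are distinct modulo $q_{0}^{\R}$, ensures the excluded spirals are disjoint, so the domains are nonempty and the compact exhaustion makes sense.

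The two main analytic ingredients from Sauloy are then invoked in order. One splits $\L_{q_{0}^{\e},\mathrm{I}+(q_{0}^{\e}-1)A_{0}(\e)}$ using the Jordan decomposition into a diagonal part $\mathrm{Diag}(\L_{q_{0}^{\e},d_{i}(\e)})$ and a unipotent part $e^{\log(U(\e))l_{q_{0}^{\e}}}$. The heart of the matter is the scalar confluence: $\L_{q_{0}^{\e},d}(z)\to z^{a}$ (for $d=1+(q_{0}^{\e}-1)a+o(\e)$) and $(q_{0}^{\e}-1)l_{q_{0}^{\e}}(z)\to\log(z)$, uniformly on compact subsets of $\C^{*}\setminus(-q_{0}^{\R})$, which is exactly Sauloy's limit for the theta-quotient and the $q$-logarithm. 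Feeding the continuity of $Q_{0}(\e)$ and of the Jordan data into these scalar limits yields $\L_{q_{0}^{\e},\mathrm{I}+(q_{0}^{\e}-1)A_{0}(\e)}\to z^{\widetilde{A}_{0}}$ uniformly on compacts of $\widetilde{\Omega}_{0,t}$. For the gauge factors, the recursive construction of the coefficients of $\hat{H}_{0}(z,t,\e)$ from $A(z,t,\e)$ — normalized by $\hat{H}_{0}(0,t,\e)=\mathrm{I}$ — depends continuously on the data, so nonresonance plus the uniform convergence of $A$ gives $\hat{H}_{0}(z,t,\e)\to\widetilde{H}_{0}(z,t)$ with $\widetilde{H}_{0}(0,t)=\mathrm{I}$, where $\widetilde{H}_{0}$ is the holomorphic gauge of the limiting differential system. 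Multiplying the two convergences gives part (1); part (2) is identical after the change $z\mapsto z^{-1}$, working at $\infty$ with $A_{\infty}(\e)$ and the spirals $\widetilde{a_{i}}q_{0}^{\R_{<0}}$.

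The main obstacle I expect is controlling the gauge transform $\hat{H}_{0}(z,t,\e)$ uniformly in $\e$ on a \emph{fixed} disc, not merely coefficient by coefficient: one must bound its radius of convergence away from $0$ uniformly as $\e\to 0^{+}$ and propagate the uniform convergence from a neighborhood of $0$ to all compact subsets of $\widetilde{\Omega}_{0,t}$ by using the functional equation $\sq\hat{H}_{0}=R A\,\hat{H}_{0}\,(\sq\L)\L^{-1}$ to continue along the torus. This uniformity is exactly what Sauloy establishes, and the non-resonance hypothesis is what prevents small denominators in the recursion from blowing up the estimates; since we are allowed to cite \cite{S00}, \S 3, the role of this proof is to check that our normalized solutions $Y_{0},Y_{\infty}$ coincide with his and that all his standing hypotheses hold in our deformation, after which his convergence statements transfer verbatim.
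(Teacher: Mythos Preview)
Your proposal is correct and matches the paper's approach: the paper does not give an independent proof of Proposition~\ref{propo2} but simply states that ``the work of Sauloy, see \cite{S00}, \S 3, may be applied,'' and your write-up is precisely a verification that the standing hypotheses of \cite{S00} hold in the present deformation setting so that Sauloy's convergence results transfer verbatim. The extra detail you supply (the scalar confluences $\L_{q_{0}^{\e},d}\to z^{a}$, $(q_{0}^{\e}-1)l_{q_{0}^{\e}}\to\log z$, and the uniform control of $\hat{H}_{0}$ via non-resonance) is exactly the content of \cite{S00}, \S 3, so nothing is missing.
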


Let $$\widetilde{Y}_{0}(z,t):=\widetilde{H}_{0}(z,t)z^{\widetilde{A_{0}}},$$

$$\widetilde{Y}_{\infty}(z,t):=\widetilde{H}_{\infty}(z,t)z^{\widetilde{A_{\infty}}},$$

which are the solutions obtained via the Frobenius algorithm of 
$$z\frac{\mathrm{d}}{\mathrm{d}z} \widetilde{Y}(z,t)=\widetilde{A}(z,t)\widetilde{Y}(z,t).
$$
It follows that $P(z,t,\e)$ converges uniformly to $\widetilde{P}(z,t):=\widetilde{Y}_{\infty}(z,t)^{-1}\widetilde{Y}_{0}(z,t)$ in every compact subset of $$\widetilde{\Omega}_{t}:=\C^{*}\setminus \left\{-q_{0}^{\R},\widetilde{a_{1}}q_{0}^{\R},\dots,\widetilde{a_{m}}q_{0}^{\R}\right\},$$ when $\e\to 0^{+}$. Moreover, the matrix $\widetilde{P}(z,t)$ is locally constant, since it is meromorphic  and satisfies  $\frac{\mathrm{d}}{\mathrm{d}z} \widetilde{P}(z,t)=0$.

\begin{figure}[ht]
\begin{center}
\includegraphics[width=0.8\linewidth]{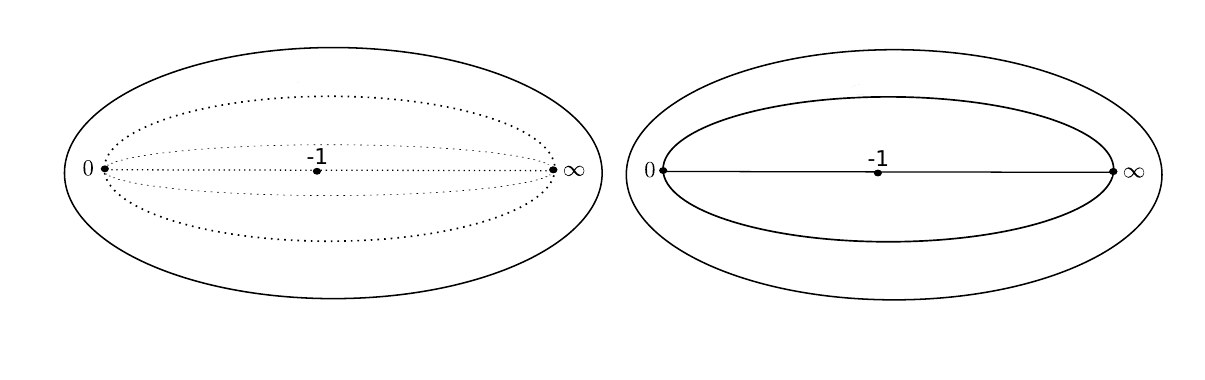}
\caption{Domain of definition of $z\mapsto P(z,t,\e)$ (left), and $z\mapsto \widetilde{P}(z,t)$ (right).}
\end{center}
\end{figure}

Let~$\widetilde{a}_{0}=\widetilde{a}_{m+1}:=-1$. We remind that  for all $t\in U$, the complex numbers $-1,\widetilde{a_{1}}(t),\dots,\widetilde{a_{m}}(t)$
 are distinct modulo ${q_{0}^{\R}}$. Let $\phi$ be the bijection of $\{0,\dots,m+1\}$, such that $\phi(0)=0$, $\phi(m+1)=m+1$, and for all $t\in U$, a positive circle around $z=0$ starting at $z=-1$ intersect the spirals $\widetilde{a}_{\phi(1)}(t)q_{0}^{\R},\dots,\widetilde{a}_{\phi(m)}(t)q_{0}^{\R}$ in this order. For $j\in \{0,\dots,m\}$, let $\widetilde{U}_{j}$ be the open connected subset of $\C^{*}$ with border $\widetilde{a}_{\phi(j)}(t)q_{0}^{\R}\cup\widetilde{a}_{\phi(j+1)}(t)q_{0}^{\R}$.
Note that the connected component of the domain of definition of~$\widetilde{P}$ are the~$\widetilde{U}_{j}$ with $j\in \{0,\dots,m\}$. Let~${z\mapsto \widetilde{P}_{j}(z,t)\in \mathrm{GL}_{\nu}(\C)}$ be the value of~$\widetilde{P}$ in~$\widetilde{U}_{j}$. 

\pagebreak[3]
\begin{theo}[\cite{S00}, $\S 4$]\label{theo1}
Let $j\in \{1,\dots,m\}$. The monodromy matrix of the linear differential equation~${z\frac{\mathrm{d}}{\mathrm{d}z} \widetilde{Y}(z,t)=\widetilde{A}(z,t)\widetilde{Y}(z,t)}$ in the basis~$\widetilde{Y}_{0}(z,t)$ around the singularity~$\widetilde{a}_{\phi(j)}(t)$ is~$\left(\widetilde{P}_{j}\right)^{-1}\widetilde{P}_{j-1}$.
\end{theo}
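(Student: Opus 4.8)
The plan is to derive the formula from Proposition~\ref{propo2} by a careful analysis of the determinations of $\widetilde{Y}_{0}$ and $\widetilde{Y}_{\infty}$ along the spirals. By Proposition~\ref{propo2}, $\widetilde{Y}_{0}$ is a single-valued invertible holomorphic solution on $\widetilde{\Omega}_{0,t}=\C^{*}\setminus\{-q_{0}^{\R},\widetilde{a}_{1}q_{0}^{\R_{>0}},\dots,\widetilde{a}_{m}q_{0}^{\R_{>0}}\}$, being the uniform limit of single-valued functions, and likewise $\widetilde{Y}_{\infty}$ on $\widetilde{\Omega}_{\infty,t}$. Since $|q_{0}|>1$, the arc $\widetilde{a}_{i}q_{0}^{\R_{>0}}$ is the outer half-spiral joining $\widetilde{a}_{i}$ to $\infty$ and $\widetilde{a}_{i}q_{0}^{\R_{<0}}$ the inner half joining $\widetilde{a}_{i}$ to $0$. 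Thus $\widetilde{Y}_{0}$ is cut only along the outer half-spirals and stays holomorphic across the inner ones, whereas $\widetilde{Y}_{\infty}$ is cut only along the inner half-spirals and stays holomorphic across the outer ones. I would first record that the cyclic order in which the spirals $\widetilde{a}_{i}q_{0}^{\R}$ meet a circle $|z|=\rho$ is independent of $\rho$, because for each spiral this angular position is an affine function of $\log\rho$ with the same slope $\arg(q_{0})/\log|q_{0}|$; consequently $\widetilde{U}_{j-1}$ and $\widetilde{U}_{j}$ are genuinely adjacent across the whole spiral $\widetilde{a}_{\phi(j)}q_{0}^{\R}$.

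Write $M$ for the monodromy matrix of the differential system around $\widetilde{a}_{\phi(j)}$ in the basis $\widetilde{Y}_{0}$, so that continuing $\widetilde{Y}_{0}$ along a small positive loop enclosing only $\widetilde{a}_{\phi(j)}$ gives $\widetilde{Y}_{0}M$. On $\widetilde{U}_{j-1}$ one has $\widetilde{Y}_{0}=\widetilde{Y}_{\infty}\widetilde{P}_{j-1}$, in which $\widetilde{P}_{j-1}$ is a constant matrix. The key step is to apply to this identity the analytic continuation that crosses the outer half of $\widetilde{a}_{\phi(j)}q_{0}^{\R}$ into $\widetilde{U}_{j}$. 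Continuation respects products and inverses and fixes the constant matrix $\widetilde{P}_{j-1}$; moreover $\widetilde{Y}_{\infty}$ is holomorphic across the outer half-spiral, so it is carried to its determination on $\widetilde{U}_{j}$, while the continuation of $\widetilde{Y}_{0}$ across the outer half differs from its determination on $\widetilde{U}_{j}$ (which is reached across the inner half, where $\widetilde{Y}_{0}$ is holomorphic) by exactly one positive loop around $\widetilde{a}_{\phi(j)}$, hence equals $\widetilde{Y}_{0}M$. The identity therefore becomes $\widetilde{Y}_{0}M=\widetilde{Y}_{\infty}\widetilde{P}_{j-1}$, as an equality of the determinations on $\widetilde{U}_{j}$. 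Combining with $\widetilde{Y}_{0}=\widetilde{Y}_{\infty}\widetilde{P}_{j}$ on $\widetilde{U}_{j}$ and cancelling the invertible factor $\widetilde{Y}_{\infty}$ yields $\widetilde{P}_{j}M=\widetilde{P}_{j-1}$, that is $M=(\widetilde{P}_{j})^{-1}\widetilde{P}_{j-1}$.

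The step I expect to be the main obstacle is the orientation bookkeeping that pins down the answer as $(\widetilde{P}_{j})^{-1}\widetilde{P}_{j-1}$ rather than its inverse. Two points need care: that the crossing of the outer half (for $\widetilde{Y}_{\infty}$) together with the crossing of the inner half (for $\widetilde{Y}_{0}$) really closes up into a single loop encircling $\widetilde{a}_{\phi(j)}$ and no other singularity, which relies on the $\rho$-independence of the cyclic order noted above, and that this loop is positively oriented so that $\widetilde{Y}_{0}$ acquires $M$ and not $M^{-1}$. I would settle the orientation by a local computation near $\widetilde{a}_{\phi(j)}$ in the radial-tangential frame, using the convention fixing $\phi$ (a positive circle around $0$ issued from $-1$ meets the spirals in the order $\widetilde{a}_{\phi(1)},\dots,\widetilde{a}_{\phi(m)}$, so that $\widetilde{U}_{j-1}$ precedes $\widetilde{U}_{j}$) together with the sense in which $|q_{0}|>1$ orients the inner and outer half-spirals; this shows the out-then-in path is counterclockwise. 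This is precisely the computation carried out in \cite{S00}, $\S 4$, to which the argument reduces.
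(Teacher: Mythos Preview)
The paper does not give its own proof of this theorem: it is stated with the attribution \cite{S00}, \S 4, and used as a black box. Your proposal is therefore not competing with any argument in the present paper, and in fact supplies considerably more detail than the paper does.

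As a self-contained sketch of Sauloy's argument your proposal is correct. The mechanism is exactly the right one: $\widetilde{Y}_{0}$ is single-valued on $\widetilde{\Omega}_{0,t}$ (so holomorphic across the inner half-spirals, cut along the outer ones), $\widetilde{Y}_{\infty}$ is single-valued on $\widetilde{\Omega}_{\infty,t}$ (so holomorphic across the outer half-spirals, cut along the inner ones), and comparing the continuations of the identity $\widetilde{Y}_{0}=\widetilde{Y}_{\infty}\widetilde{P}_{j-1}$ from $\widetilde{U}_{j-1}$ into $\widetilde{U}_{j}$ along the outer versus the inner crossing isolates exactly one monodromy factor around $\widetilde{a}_{\phi(j)}$. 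Your remark that the cyclic order of the spirals on circles $|z|=\rho$ is independent of $\rho$ is the right justification that the two crossings close up to a small loop around the single singularity $\widetilde{a}_{\phi(j)}$, and your orientation check via the convention defining $\phi$ is the correct way to fix the sign. You yourself note at the end that this is precisely the computation in \cite{S00}, \S 4; that is accurate, and there is nothing further to compare.
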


\pagebreak[3]
\section{Application to the $q$-analogue of the sixth Painlevé equation}\label{sec4}
We begin this section, by reminding the work of Jimbo and Sakai, see \cite{JS}, in the framework of meromorphic functions on $\C^{*}$, rather than multivalued functions. Let us consider the $2\times 2$ linear differential system that is relevant for the sixth Painlevé equation:
$$\frac{\mathrm{d}}{\mathrm{d}z}Y(z,t)=\mathcal{A}(z,t)Y(z,t), \hbox{ with } \mathcal{A}(z,t):=\frac{\mathcal{A}_{0}}{z}+\frac{\mathcal{A}_{1}}{z-1}+\frac{\mathcal{A}_{t}}{z-t}.$$
Here, $t$ denotes a parameter that belongs to $U$, an open connected subset of $\C^{*}$, and $\mathcal{A}_{0},\mathcal{A}_{1},\mathcal{A}_{t}$ are $2\times 2$ complex matrices.  Let us fix $q$ a complex number with $|q|>1$, assume that $U$ is stable under $\sq$. Let $p=1/q$ and $\sp:=\sq^{-1}$. Following \cite{JS}, we consider the linear $q$-difference system,
\begin{equation}\label{eq6}\begin{array}{lll}
\sp Y(z,t)&=&(z-1)(z-t)(\mathrm{I}+(p-1)z\mathcal{A}(z,t))Y(z,t)\\
 &=&\left(A_{0}(t)+A_{1}(t)z+\left(\mathrm{I}+(p-1)A_{2}\right)z^{2}\right)Y(z,t)=A(z,t)Y(z,t). 
\end{array}\end{equation}
Moreover, let us assume that:
\begin{itemize}
\item $A_{0}(t)$ is diagonalizable and has eigenvalues, counted with multiplicity, $t\t_{1}$ and $t\t_{2}$. Therefore, there exists $B_{0}\in \mathrm{GL}_{2}(\C)$, such that $A_{0}(t)=tB_{0}$.
\item For all $t\in U$, $A_{1}(t)$ is a complex $2\times 2$ matrix.
\item $A_{2}:=\begin{pmatrix}
\k_{1}&0\\
0&\k_{2}
\end{pmatrix}$. 
\item For all $t\in U$, $t\t_{1},t\t_{2}$, (resp. $1+(p-1)\k_{1},1+(p-1)\k_{2}$) are equal or are distinct modulo $q^{\Z}$\footnote{Note that this assumption is not present in the work of Jimbo and Sakai. The reason of this extra assumption is due to the fact that we will use the confluence results of \cite{S00} in $\S \ref{sec3}$ that force us to consider $q$-difference systems that are not resonant.}.
\item There exist $a_{1},\dots,a_{4}\in \C^{*}$ such that $$\det(A(z,t))=(1+(p-1)\k_{1})(1+(p-1)\k_{2})(z-ta_{1})(z-ta_{2})(z-a_{3})(z-a_{4}).$$
\end{itemize}
Note that for $p$ sufficiently close to $1$, (\ref{eq6}) satisfies the same assumptions as (\ref{eq2}) in $\S \ref{sec2}$ and we may apply Proposition~\ref{propo1}. Therefore, the Birkhoff connection matrix is pseudo constant if and only if we have the existence of a convenient matrix $z\mapsto B(z,t)\in \mathrm{GL}_{2}(\C(z))$ that satisfies properties described in Proposition~\ref{propo1}. Moreover, $B(z,t)$ has only simple poles that are equal to $ta_{1},ta_{2}$ if $a_{1}\neq a_{2}$, (resp. $B(z,t)$ has only a double pole that is equal to $ta_{1}$ if $a_{1}= a_{2}$)  and we have
$$
A(z,qt)B(z,t)=B(qz,t)A(z,t).
$$
The latter system leads Jimbo and Sakai to the definition of the $q$-analogue of the sixth Painlevé equation. \\ \par 
As in $\S \ref{sec3}$ we would like to make (\ref{eq6}) depends upon $q$ and see what is the behaviour of the Birkhoff connection matrix when $q$ goes to $1$. Unfortunately, (\ref{eq6}) does not degenerate very well when $q$ goes to $1$, due to the factor $(z-1)(z-t)$. Therefore, we are going to solve the factor $(z-1)(z-t)$ separately.  
As in $\S \ref{sec3}$, let us fix a complex number $q_{0}$ with $|q_{0}|>1$. Put $q:=q_{0}^{\e}$ and let us make $\e\in \R_{>0}$ goes to $0^{+}$ in the equation 

\begin{equation}\label{eq7}
\sigma_{q_{0}^{-\e}} Y(z,t,\e)=\left(\mathrm{I}+(q_{0}^{-\e}-1)z\mathcal{A}(z,t)\right)Y(z,t,\e).
\end{equation}
Assume that ${\frac{\mathrm{d}}{\mathrm{d}z} \widetilde{Y}(z,t)=\mathcal{A}(z,t)\widetilde{Y}(z,t)}$ has exponents at~$0$ and $\infty$ which are non resonant. Moreover, let us assume that for all $t\in U$, 
$-1,1,t$, are distinct modulo $q_{0}^{\R}$, and $U$ is stable under $\sigma_{q_{0}^{\e}}$ for every $\e\in \R_{>0}$. Note that for $\e$ sufficiently close to $0$, (\ref{eq7}) satisfies the assumptions of (\ref{eq4}) in $\S \ref{sec3}$. Let 
$$Y_{0}(z,t,\e)=\hat{H}_{0}(z,t,\e)\L_{q_{0}^{\e},B_{0}^{-1}(\e)}, $$
$$Y_{\infty}(z,t,\e)=\hat{H}_{\infty}(z,t,\e)\L_{q_{0}^{\e},\left(\mathrm{I}+(q_{0}^{-\e}-1)A_{2}\right)^{-1}}, $$
with $z\mapsto \hat{H}_{0}(z,t,\e)\in \mathrm{GL}_{2}(\C\{z\})$, $z\mapsto \hat{H}_{\infty}(z,t,\e)\in \mathrm{GL}_{2}(\C\{z^{-1}\})$ and ${\hat{H}_{0}(0,t,\e)=\hat{H}_{\infty}(\infty,t,\e)=\mathrm{I}}$ be the solutions of (\ref{eq7}) defined in $\S \ref{sec1}$. Let $\widetilde{Y}_{0}(z,t),\widetilde{Y}_{\infty}(z,t)$, be the solutions of ${\frac{\mathrm{d}}{\mathrm{d}z}Y(z,t)=\mathcal{A}(z,t)Y(z,t)}$ defined in $\S \ref{sec3}$.
Let ${P(z,t,\e)=Y_{\infty}(z,t,\e)^{-1}Y_{0}(z,t,\e)}$, be the Birkhoff connection matrix of (\ref{eq7}). Due to Proposition \ref{propo2}, for all $t\in U$, we have the uniform convergence of $P(z,t,\e)$ to the locally constant matrix $\widetilde{P}(z,t)=\widetilde{Y}_{\infty}(z,t)^{-1}\widetilde{Y}_{0}(z,t)$ in every compact subset of $$\widetilde{\Omega}_{t}:=\C^{*}\setminus \left\{-q_{0}^{\R},q_{0}^{\R},tq_{0}^{\R}\right\}$$ when $\e\to 0^{+}$.  Moreover, Theorem \ref{theo1} may be applied, and the values of $\widetilde{P}(z,t)$ in the connected component of its domain of definition allow us to recover the monodromy matrices at the singularities $z=1$ and $z=t$ of $\frac{\mathrm{d}}{\mathrm{d}z}Y(z,t)=\mathcal{A}(z,t)Y(z,t)$. 
\pagebreak[3]
\begin{rem}
Note that $P$ is also a Birkhoff connexion matrix of 
\begin{equation}\label{eq8}\begin{array}{lll}
\sigma_{q_{0}^{-\e}} Y(z,t,\e)&=&(z-1)(z-t)(\mathrm{I}+(q_{0}^{-\e}-1)z\mathcal{A}(z,t))Y(z,t,\e)\\
 &=&\left(tB_{0}(\e)+A_{1}(t,\e)z+\left(\mathrm{I}+(q_{0}^{-\e}-1)A_{2}\right)z^{2}\right)Y(z,t,\e). 
\end{array}\end{equation} 
Unfortunately, $P$ is different to $Q$, the Birkhoff connexion matrix of (\ref{eq8}), that is defined in~$\S \ref{sec1}$. To simplify the notations, from now, we are going to write $q$ instead of $q_{0}^{\e}$ and $p=1/q$ instead of $q_{0}^{-\e}$. The goal of what follows is to prove the following theorem:
\end{rem}
 
\pagebreak[3]
\begin{theo}\label{theo2}
The Birkhoff connection matrix ${Q(z,t,\e)}$ of (\ref{eq8}) that is defined in $\S \ref{sec1}$ equals to
\begin{equation}\label{eq9}
 \frac{\T_{q}(qz)^{2}\prod_{n=0}^{\infty}\left(1-q^{-n-1}\right)^{2}}{\T_{q}\left(-qz\right)\T_{q}\left(\frac{-qz}{t}\right)}\left(\mathrm{I}+(p-1)A_{2}\right)^{2}P(z,t,\e)\left(\L_{q,B_{0}^{-1}(\e)}\right)^{-1}\frac{\L_{q,t^{-1}B_{0}^{-1}(\e)}}{q^{3}}.
\end{equation}
Furthermore, the Birkhoff connection matrix $Q(z,t,\e)$ is pseudo constant if and only if $P(z,qt,\e)=-P(z,t,\e)t^{2}B_{0}(\e)$.
\end{theo}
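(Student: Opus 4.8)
The plan is to exploit that the matrix of (\ref{eq8}) equals $(z-1)(z-t)$ times the matrix of (\ref{eq7}): the two systems have the \emph{same} matrix part and differ only by this scalar. First I would write down the solutions of (\ref{eq8}) furnished by $\S\ref{sec1}$. Rewriting the $\sp$-system (\ref{eq8}) as a $\sq$-system $\sq Y=\big(M(qz)\big)^{-1}Y$ (where $M$ is its matrix), the scalar part $\frac{1}{(qz-1)(qz-t)}$ of the rewritten system has value $1/t$ at $z=0$ and behaves like $q^{-2}z^{-2}$ at $z=\infty$, whence the recipe of $\S\ref{sec1}$ gives
$$Y_{0}(z,t,\e)=\hat H_{0}^{(8)}(z,t,\e)\,\L_{q,t^{-1}B_{0}^{-1}},\quad Y_{\infty}(z,t,\e)=\hat H_{\infty}^{(8)}(z,t,\e)\,\L_{q,q^{-2}K^{-1}}\,\T_{q}(z)^{-2},$$
with $K:=\mathrm{I}+(p-1)A_{2}$ and $\hat H_{0}^{(8)}(0)=\hat H_{\infty}^{(8)}(\infty)=\mathrm{I}$. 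The one subtle point is the behaviour at $\infty$: because $\sp=\sq^{-1}$ and $(z-1)(z-t)$ has degree $2$, the multiplier is $q^{-2}K^{-1}$ (not $K^{-1}$) together with the theta power $\T_{q}(z)^{-2}$, and the factor $q^{-2}$ is exactly what will produce $(\mathrm{I}+(p-1)A_{2})^{2}$ and the power of $q$ in (\ref{eq9}).

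The key structural fact is that, since the matrix parts coincide, the analytic factors are proportional. Comparing the functional equations of the $\hat H$'s of (\ref{eq7}) and (\ref{eq8}) (which differ only by the scalars $\frac{(z-1)(z-t)}{t}$ at $0$ and $\frac{(z-1)(z-t)}{z^{2}}$ at $\infty$), I would show $\hat H_{0}^{(8)}=\phi_{0}\hat H_{0}^{(7)}$ and $\hat H_{\infty}^{(8)}=\phi_{\infty}\hat H_{\infty}^{(7)}$, where $\hat H_{0}^{(7)},\hat H_{\infty}^{(7)}$ are the analytic factors of the solutions of (\ref{eq7}) and $\phi_{0}$ (resp.\ $\phi_{\infty}$) is the scalar solution of $\sp\phi_{0}=\frac{(z-1)(z-t)}{t}\phi_{0}$ analytic equal to $1$ at $0$ (resp.\ $\sp\phi_{\infty}=\frac{(z-1)(z-t)}{z^{2}}\phi_{\infty}$ analytic equal to $1$ at $\infty$); these exist as convergent infinite products since $|q|>1$, and uniqueness of the normalised solutions in $\S\ref{sec1}$ gives the identities. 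Substituting into $Q=Y_{\infty}^{-1}Y_{0}$, replacing $(\hat H_{\infty}^{(7)})^{-1}\hat H_{0}^{(7)}$ through $P=(Y_{\infty}^{(7)})^{-1}Y_{0}^{(7)}$, and using the scaling identity $\L_{q,q^{-2}K^{-1}}=\frac{q^{-1}}{z^{2}}K^{-2}\L_{q,K^{-1}}$ (valid since $K$ is diagonal, and the source of the matrix $K^{2}=(\mathrm{I}+(p-1)A_{2})^{2}$), I would reach
$$Q=\frac{qz^{2}\,\phi_{0}\,\T_{q}(z)^{2}}{\phi_{\infty}}\,(\mathrm{I}+(p-1)A_{2})^{2}\,P\,\L_{q,B_{0}^{-1}}^{-1}\L_{q,t^{-1}B_{0}^{-1}}.$$

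It then remains to identify the scalar prefactor with $c(z)/q^{3}$, where $c(z)$ is the theta quotient appearing in (\ref{eq9}); since $\T_{q}(qz)=z\T_{q}(z)$ this reduces to the identity $\frac{\phi_{0}}{\phi_{\infty}}=\frac{\prod_{n\geq0}(1-q^{-n-1})^{2}}{q^{4}\,\T_{q}(-qz)\T_{q}(-qz/t)}$. This is the step I expect to be the main obstacle, though it is of routine nature: one checks that both sides have the same $\sp$-multiplier $z^{2}/t$, so their quotient is elliptic; from the explicit products for $\phi_{0},\phi_{\infty}$ and the product formula for $\T_{q}$ one checks they have the same divisor (simple poles on $q^{\Z}\cup tq^{\Z}$ and no zeros), so the quotient is a constant; and one pins this constant down — the origin of the $\prod(1-q^{-n-1})^{2}$ and of $q^{-3}$ in (\ref{eq9}) — by comparing leading terms. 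This establishes (\ref{eq9}).

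For the second assertion I would substitute $t\mapsto qt$ in (\ref{eq9}). From $\T_{q}(-qz/t)=(-z/t)\T_{q}(-z/t)$ one gets $c(z,qt)=-(z/t)\,c(z,t)$, and from the same scaling identity (again using that $B_{0}$ is diagonalizable) $\L_{q,(qt)^{-1}B_{0}^{-1}}=z^{-1}t^{-1}B_{0}^{-1}\L_{q,t^{-1}B_{0}^{-1}}$; since $B_{0}^{-1}$ commutes with $\L_{q,B_{0}^{-1}}$, the $z$-factors cancel and
$$Q(z,qt,\e)=-t^{-2}c(z,t)(\mathrm{I}+(p-1)A_{2})^{2}\,P(z,qt,\e)\,B_{0}^{-1}\L_{q,B_{0}^{-1}}^{-1}\L_{q,t^{-1}B_{0}^{-1}}/q^{3}.$$
Comparing with $Q(z,t,\e)$ from (\ref{eq9}) and cancelling the common invertible factors $c(z,t)(\mathrm{I}+(p-1)A_{2})^{2}$ on the left and $\L_{q,B_{0}^{-1}}^{-1}\L_{q,t^{-1}B_{0}^{-1}}/q^{3}$ on the right, the condition $Q(z,t,\e)=Q(z,qt,\e)$ becomes $P(z,t,\e)=-t^{-2}P(z,qt,\e)B_{0}^{-1}$, i.e.\ $P(z,qt,\e)=-P(z,t,\e)t^{2}B_{0}(\e)$, which is the stated criterion.
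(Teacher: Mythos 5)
Your proposal is correct and follows essentially the same route as the paper: you factor the local solutions of (\ref{eq8}) as scalar solutions of the $(z-1)(z-t)$ part times the solutions of (\ref{eq7}), use the scaling identity for $\L_{q,q^{-2}\left(\mathrm{I}+(p-1)A_{2}\right)^{-1}}$ to produce the factor $\left(\mathrm{I}+(p-1)A_{2}\right)^{2}$ and the power of $q$, and obtain the pseudo-constancy criterion by substituting $t\mapsto qt$ and cancelling the invertible factors, exactly as in the paper. The only divergence is at the step you yourself flag as the obstacle: the paper isolates it as Lemma \ref{lem1}, computing your $\phi_{0}$ and $\phi_{\infty}$ as explicit infinite products and evaluating $\phi_{0}/\phi_{\infty}$ directly via the Jacobi triple product formula for $\T_{q}\left(-qz/\a\right)$ (which yields the constant $\prod_{n\geq 0}\left(1-q^{-n-1}\right)^{2}$ at once), whereas you propose an equivalent elliptic-function/divisor argument for the same identity and defer the determination of the constant.
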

 Before proving the theorem, let us sate and prove a lemma.
 
\pagebreak[3]
\begin{lem}\label{lem1}
Let $\a\in\C^{*}\setminus \left\{-q^{\Z}\right\}$. Let $y_{\a,0},y_{\a,\infty}$, be the two solutions of $\sp y(z)=(z-\a)y(z)$ described in $\S \ref{sec1}$. Then, we have  
$$y_{\a,0}:=\frac{\T_{q}(z)}{\displaystyle \prod_{n=0}^{\infty}\left(1-q^{-n-1}\frac{qz}{\a}\right)\T_{q}(-\a z)},y_{\a,\infty}:=\frac{\displaystyle \prod_{n=0}^{\infty}\left(1-q^{-n}\frac{\a}{qz}\right)}{\T_{q}(qz)},$$
and
$$y_{\a,\infty}^{-1}y_{\a,0}=\displaystyle \prod_{n=0}^{\infty}\left(1-q^{-n-1}\right)\frac{\T_{q}(z)\T_{q}(qz)}{\T_{q}\left(\frac{-qz}{\a}\right)\T_{q}(-\a z)}.$$
\end{lem}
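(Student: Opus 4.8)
The plan is to check the two displayed formulas by a direct computation with the functional equations of $\T_{q}$ and of the two infinite products, and then to read off the last identity from the product expansion of $\T_{q}$ recalled in $\S\ref{sec1}$. Since $\sp=\sq^{-1}$ acts by $z\mapsto z/q$, I would first record, from $\sq\T_{q}(z)=z\T_{q}(z)$, the relations $\sp\T_{q}(z)=\frac{q}{z}\T_{q}(z)$ and $\sp\T_{q}(qz)=\frac{1}{z}\T_{q}(qz)$. For the products, a shift of the index gives, for any $w$, the elementary identities $\prod_{n=0}^{\infty}(1-q^{-n-1}w)=(1-q^{-1}w)\prod_{n=0}^{\infty}(1-q^{-n-2}w)$ and $\prod_{n=0}^{\infty}(1-q^{-n}w)=(1-w)\prod_{n=0}^{\infty}(1-q^{-n-1}w)$. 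Applying $\sp$ (which turns $\frac{qz}{\a}$ into $\frac{z}{\a}$ and $\frac{\a}{qz}$ into $\frac{\a}{z}$) together with these reindexings expresses $\sp$ of each product as an explicit rational factor times the original product. These are the only ingredients.

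Next I would substitute into $\sp y_{\a,0}$ and $\sp y_{\a,\infty}$; in both cases the rational prefactors combine and collapse to exactly $(z-\a)$, so that each function solves $\sp y=(z-\a)y$. To identify them with the solutions of $\S\ref{sec1}$, I would put the scalar equation in the normalized form $\sq y=R(z)A(z)y$: from $\sp y=(z-\a)y$ one gets $\sq y=\frac{1}{qz-\a}y$, so that one may take $A\equiv\frac{1}{q}$, the valuation and degree of $R$ are $\mu_{0}=0$ and $\mu_{\infty}=-1$, and $r_{0}A_{0}=-\frac{1}{\a}$, $r_{\infty}A_{\infty}=\frac{1}{q}$. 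Since $\L_{q,-1/\a}(z)=\frac{\T_{q}(z)}{\T_{q}(-\a z)}$ and $\L_{q,1/q}(z)\T_{q}(z)^{-1}=\T_{q}(qz)^{-1}$, the stated $y_{\a,0},y_{\a,\infty}$ are precisely $\hat{H}_{0}\L_{q,-1/\a}$ and $\hat{H}_{\infty}\L_{q,1/q}\T_{q}^{-1}$, where $\hat{H}_{0}=\prod_{n=0}^{\infty}(1-q^{-n-1}\frac{qz}{\a})^{-1}$ and $\hat{H}_{\infty}=\prod_{n=0}^{\infty}(1-q^{-n}\frac{\a}{qz})$ are the invertible germs at $0$, resp. $\infty$, normalized to $1$ there; the hypothesis $\a\notin-q^{\Z}$ keeps the relevant spirals in general position, so that these germs are genuinely invertible, matching the construction of $\S\ref{sec1}$.

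Finally, for $y_{\a,\infty}^{-1}y_{\a,0}$ I would just multiply the two formulas, which leaves $\frac{\T_{q}(z)\T_{q}(qz)}{\big(\prod_{n=0}^{\infty}(1-q^{-n-1}\frac{qz}{\a})\big)\big(\prod_{n=0}^{\infty}(1-q^{-n}\frac{\a}{qz})\big)\T_{q}(-\a z)}$, and then recognize that the two products in the denominator, multiplied by $\prod_{n=0}^{\infty}(1-q^{-n-1})$, are exactly the product expansion of $\T_{q}(-qz/\a)$ given in $\S\ref{sec1}$: with $w=-qz/\a$ one has $1+q^{-n-1}w=1-q^{-n-1}\frac{qz}{\a}$ and $1+q^{-n}w^{-1}=1-q^{-n}\frac{\a}{qz}$. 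This yields the claimed expression. This single recognition is the only non-formal step, and I do not expect a genuine obstacle; the main care is to keep the $\sp$-shifts and the reindexing of the infinite products consistent.
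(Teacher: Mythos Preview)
Your proposal is correct and follows essentially the same route as the paper. The paper starts from the $\S\ref{sec1}$ form $y_{\a,0}=\hat{h}_{0}\,\T_{q}(z)/\T_{q}(-\a z)$, $y_{\a,\infty}=\hat{h}_{\infty}/\T_{q}(qz)$, derives the functional equations $\sq\hat{h}_{0}=(1-qz/\a)^{-1}\hat{h}_{0}$ and $\sq\hat{h}_{\infty}=(1-\a/(qz))^{-1}\hat{h}_{\infty}$, and solves them as the same infinite products you write down; the ratio is then obtained, as you do, by recognizing the Jacobi triple product $\T_{q}(-qz/\a)$. Your only cosmetic difference is that you verify the explicit formulas first and then identify the pieces with the $\S\ref{sec1}$ construction, whereas the paper derives the products from that construction; the content is the same.
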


\begin{proof}[Proof of Lemma \ref{lem1}]
First of all, note that $\sp y(z)=(z-\a)y(z)$ is equivalent to ${\sq y=\frac{1}{qz-\a}y}$.  Therefore, there exist $\hat{h}_{0}(z) \in \C\{z\}$, and $\hat{h}_{\infty}\left(z\right)\in \C\{z^{-1}\}$ with ${\hat{h}_{0}\left(0\right)=\hat{h}_{\infty}\left(\infty\right)=1}$ such that 
$$y_{\a,0}:=\frac{\hat{h}_{0}(z)\T_{q}(z)}{\T_{q}(-\a z)},\hbox{ and }y_{\a,\infty}:=\frac{\hat{h}_{\infty}(z)}{\T_{q}(qz)}.$$
The function $\hat{h}_{0}$ is solution of $\sq \left(\hat{h}_{0}\right)=\dfrac{1}{1-\frac{qz}{\a}}h_{0}$. Since $h_{0}(0)=1$, we have 
$$h_{0}=\displaystyle \prod_{n=0}^{\infty}\left(1-q^{-n-1}\frac{qz}{\a}\right)^{-1}. $$
Similarly, $\sq \left(\hat{h}_{\infty}\right)=\dfrac{1}{1-\frac{\a}{qz}}h_{\infty}$ and  
$$h_{\infty}=\displaystyle \prod_{n=0}^{\infty}\left(1-q^{-n}\frac{\a}{qz}\right). $$
We now use the Jacobi triple product formula
$$\T_{q}\left(\frac{-qz}{\a}\right)=\displaystyle \prod_{n=0}^{\infty}\left(1-q^{-n-1}\right)\left(1-q^{-n-1}\frac{qz}{\a}\right)\left(1-q^{-n}\frac{\a}{qz}\right),$$
to deduce that 
$$h_{\infty}^{-1}h_{0}\displaystyle \prod_{n=0}^{\infty}\left(1-q^{-n-1}\right)^{-1}=\T_{q}\left(-\frac{qz}{\a}\right)^{-1}.$$
This completes the proof.
\end{proof}

\begin{proof}[Proof of Theorem \ref{theo2}]
Let $W_{0}(z,t,\e),W_{\infty}(z,t,\e)$ be the solutions of (\ref{eq8}) that are defined in~$\S \ref{sec1}$. Since (\ref{eq8}) is equivalent to 
$$\sq Y(z,t,\e)=\left(tB_{0}(\e)+A_{1}(t,\e)qz+\left(\mathrm{I}+(p-1)A_{2}\right)q^{2}z^{2}\right)^{-1}Y(z,t,\e),$$
we find 
$$W_{0}(z,t,\e)=\hat{F}_{0}(z,t,\e)\L_{q,t^{-1}B_{0}^{-1}(\e)}, $$
$$W_{\infty}(z,t,\e)=\hat{F}_{\infty}(z,t,\e)\L_{q,\left(\mathrm{I}+(p-1)A_{2}\right)^{-1}q^{-2}}\T_{q}(z)^{-2}, $$
with $z\mapsto \hat{F}_{0}(z,t,\e)\in \mathrm{GL}_{2}(\C\{z\})$, $z\mapsto \hat{F}_{\infty}(z,t,\e)\in \mathrm{GL}_{2}(\C\{z^{-1}\})$ and ${\hat{F}_{0}(0,t,\e)=\hat{F}_{\infty}(\infty,t,\e)=\mathrm{I}}$.
Using Lemma~\ref{lem1}, we find  
$$W_{0}(z,t,\e)=y_{1,0}(z,\e)y_{t,0}(z,t,\e)Y_{0}(z,t,\e)\left(\L_{q,B_{0}^{-1}(\e)}\right)^{-1}\frac{\T_{q}(-z)\T_{q}(-tz)\L_{q,t^{-1}B_{0}^{-1}(\e)}}{\T_{q}(z)^{2}} $$
and 
$$
\begin{array}{lll}
W_{\infty}(z,t,\e)&=&y_{1,\infty}(z,\e)y_{t,\infty}(z,t,\e)Y_{\infty}(z,t,\e)\left(\L_{q,\left(\mathrm{I}+(p-1)A_{2}\right)^{-1}}\right)^{-1}\frac{\T_{q}(qz)^{2}\L_{q,\left(q^{2}\left(\mathrm{I}+(p-1)A_{2}\right)\right)^{-1}}}{\T_{q}(z)^{2}}\\
&=&y_{1,\infty}(z,\e)y_{t,\infty}(z,t,\e)Y_{\infty}(z,t,\e)q^{3}\left(\mathrm{I}+(p-1)A_{2}\right)^{-2}.
\end{array}
 $$
We now apply Lemma~\ref{lem1}, to prove that the Birkhoff connection matrix ${Q(z,t,\e)=W_{\infty}^{-1}(z,t,\e)W_{0}(z,t,\e)}$ equals to
$$\displaystyle \prod_{n=0}^{\infty}\left(1-q^{-n-1}\right)^{2}\frac{\T_{q}(qz)^{2}}{\T_{q}\left(-qz\right)\T_{q}\left(\frac{-qz}{t}\right)}\left(\mathrm{I}+(p-1)A_{2}\right)^{2}P(z,t,\e)\left(\L_{q,B_{0}^{-1}(\e)}\right)^{-1}\frac{\L_{q,t^{-1}B_{0}^{-1}(\e)}}{q^{3}}.$$
This proves (\ref{eq9}).
To conclude the proof of the theorem, we just have to show that the Birkhoff connection matrix $Q(z,t,\e)$ is pseudo constant if and only if ${P(z,qt,\e)=-P(z,t,\e)t^{2}B_{0}(\e)}$. We use (\ref{eq9}) to deduce that the Birkhoff connection matrix $Q(z,t,\e)$ is pseudo constant if and only if
$$\frac{P(z,qt,\e)\L_{q,(qt)^{-1}B_{0}^{-1}(\e)}}{\T_{q}\left(\frac{-z}{t}\right)}=\frac{P(z,t,\e)\L_{q,t^{-1}B_{0}^{-1}(\e)}}{\T_{q}\left(\frac{-qz}{t}\right)}.$$
This is equivalent $P(z,qt,\e)=-P(z,t,\e)t^{2}B_{0}(\e)$, which proves the result.
\end{proof}

\pagebreak[3]
\begin{rem}
Instead of the Birkhoff connexion matrix, following \cite{S15}, one could prefer to study the behaviour of the matrix $\hat{F}_{\infty}(z,t,\e)^{-1}\hat{F}_{0}(z,t,\e)$. 
Applying Proposition \ref{propo2}, we find that for all $t\in U$, there exist $z\mapsto\widetilde{H}_{0}(z,t)\in \mathrm{GL}_{2}(\C\{z\})$, $z\mapsto\widetilde{H}_{\infty}(z,t)\in \mathrm{GL}_{2}(\C\{z^{-1}\})$, with $\widetilde{H}_{0}(0,t)=\widetilde{H}_{\infty}(\infty,t)=\mathrm{I}$ such that $\hat{H}_{\infty}(z,t,\e)^{-1}\hat{H}_{0}(z,t,\e)$ converges uniformly to $\widetilde{H}_{\infty}(z,t)^{-1}\widetilde{H}_{0}(z,t)$ in every compact subset of $\widetilde{\Omega}_{t}$ when $\e\to 0^{+}$. Moreover as we can see in the proof of Lemma \ref{lem1}, we obtain that 
$$\hat{F}_{\infty}(z,t,\e)^{-1}\hat{F}_{0}(z,t,\e)=\frac{\prod_{n=0}^{\infty}\left(1-q^{n-1}\right)^{2}}{\T_{q}\left(-qz\right)\T_{q}\left(\frac{-qz}{t}\right)}\hat{H}_{\infty}(z,t,\e)^{-1}\hat{H}_{0}(z,t,\e).$$
\end{rem}

\bibliographystyle{alpha}
\bibliography{biblio}

\newcommand{\etalchar}[1]{$^{#1}$}
\def\udot#1{\ifmmode\oalign{$#1$\crcr\hidewidth.\hidewidth
  }\else\oalign{#1\crcr\hidewidth.\hidewidth}\fi} \def\cprime{$'$}
  \def\polhk#1{\setbox0=\hbox{#1}{\ooalign{\hidewidth
  \lower1.5ex\hbox{`}\hidewidth\crcr\unhbox0}}} \def\cprime{$'$}
\begin{thebibliography}{KMN{\etalchar{+}}04}

\bibitem[Ada29]{Ad29}
Clarence~Raymond Adams.
\newblock On the linear ordinary {$q$}-difference equation.
\newblock {\em Ann. of Math. (2)}, 30(1-4):195--205, 1928/29.

\bibitem[B{\'e}z92]{Be}
Jean-Paul B{\'e}zivin.
\newblock Sur les \'equations fonctionnelles aux {$q$}-diff\'erences.
\newblock {\em Aequationes Math.}, 43(2-3):159--176, 1992.

\bibitem[Bor04]{Bor}
Alexei Borodin.
\newblock Isomonodromy transformations of linear systems of difference
  equations.
\newblock {\em Ann. of Math. (2)}, 160(3):1141--1182, 2004.

\bibitem[Car12]{Ca}
Robert~Daniel Carmichael.
\newblock The {G}eneral {T}heory of {L}inear {$q$}-{D}ifference {E}quations.
\newblock {\em Amer. J. Math.}, 34(2):147--168, 1912.

\bibitem[DE14]{DE}
Thomas Dreyfus and Anton Eloy.
\newblock Computing bugeaud's solutions of $q$-difference equations with a
  $q$-borel-laplace summation.
\newblock {\em Preprint}, 2014.

\bibitem[DR08]{DR}
Anne Duval and Julien Roques.
\newblock Familles fuchsiennes d'\'equations aux ({$q$}-)diff\'erences et
  confluence.
\newblock {\em Bull. Soc. Math. France}, 136(1):67--96, 2008.

\bibitem[Dre15]{D4}
Thomas Dreyfus.
\newblock Building meromorphic solutions of {$q$}-difference equations using a
  {B}orel-{L}aplace summation.
\newblock {\em Int. Math. Res. Not. IMRN}, (15):6562--6587, 2015.

\bibitem[DSK05]{DSK}
Alberto De~Sole and Victor~G. Kac.
\newblock On integral representations of {$q$}-gamma and {$q$}-beta functions.
\newblock {\em Atti Accad. Naz. Lincei Cl. Sci. Fis. Mat. Natur. Rend. Lincei
  (9) Mat. Appl.}, 16(1):11--29, 2005.

\bibitem[DV02]{DV02}
Lucia Di~Vizio.
\newblock Arithmetic theory of {$q$}-difference equations: the {$q$}-analogue
  of {G}rothendieck-{K}atz's conjecture on {$p$}-curvatures.
\newblock {\em Invent. Math.}, 150(3):517--578, 2002.

\bibitem[GNP{\etalchar{+}}94]{GNP}
B.~Grammaticos, F.~W. Nijhoff, V.~Papageorgiou, A.~Ramani, and J.~Satsuma.
\newblock Linearization and solutions of the discrete {P}ainlev\'e {III}
  equation.
\newblock {\em Phys. Lett. A}, 185(5-6):446--452, 1994.

\bibitem[GR04]{GR}
George Gasper and Mizan Rahman.
\newblock {\em Basic hypergeometric series}, volume~96 of {\em Encyclopedia of
  Mathematics and its Applications}.
\newblock Cambridge University Press, Cambridge, second edition, 2004.
\newblock With a foreword by Richard Askey.

\bibitem[JS96]{JS}
Michio Jimbo and Hidetaka Sakai.
\newblock A {$q$}-analog of the sixth {P}ainlev\'e equation.
\newblock {\em Lett. Math. Phys.}, 38(2):145--154, 1996.

\bibitem[KK03]{KK}
Kenji Kajiwara and Kinji Kimura.
\newblock On a {$q$}-difference {P}ainlev\'e {III} equation. {I}. {D}erivation,
  symmetry and {R}iccati type solutions.
\newblock {\em J. Nonlinear Math. Phys.}, 10(1):86--102, 2003.

\bibitem[KMN{\etalchar{+}}04]{KMN}
K.~Kajiwara, T.~Masuda, M.~Noumi, Y.~Ohta, and Y.~Yamada.
\newblock Hypergeometric solutions to the {$q$}-{P}ainlev\'e equations.
\newblock {\em Int. Math. Res. Not.}, (47):2497--2521, 2004.

\bibitem[MZ00]{MZ}
Fabienne Marotte and Changgui Zhang.
\newblock Multisommabilit\'e des s\'eries enti\`eres solutions formelles d'une
  \'equation aux {$q$}-diff\'erences lin\'eaire analytique.
\newblock {\em Ann. Inst. Fourier (Grenoble)}, 50(6):1859--1890 (2001), 2000.

\bibitem[NRGO01]{NRG}
F.~W. Nijhoff, A.~Ramani, B.~Grammaticos, and Y.~Ohta.
\newblock On discrete {P}ainlev\'e equations associated with the lattice
  {K}d{V} systems and the {P}ainlev\'e {VI} equation.
\newblock {\em Stud. Appl. Math.}, 106(3):261--314, 2001.

\bibitem[PNGR92]{PNG}
V.~G. Papageorgiou, F.~W. Nijhoff, B.~Grammaticos, and A.~Ramani.
\newblock Isomonodromic deformation problems for discrete analogues of
  {P}ainlev\'e equations.
\newblock {\em Phys. Lett. A}, 164(1):57--64, 1992.

\bibitem[Ram92]{R92}
Jean-Pierre Ramis.
\newblock About the growth of entire functions solutions of linear algebraic
  {$q$}-difference equations.
\newblock {\em Ann. Fac. Sci. Toulouse Math. (6)}, 1(1):53--94, 1992.

\bibitem[RGH91]{RGH}
A.~Ramani, B.~Grammaticos, and J.~Hietarinta.
\newblock Discrete versions of the {P}ainlev\'e equations.
\newblock {\em Phys. Rev. Lett.}, 67(14):1829--1832, 1991.

\bibitem[Roq11]{Ro11}
Julien Roques.
\newblock Generalized basic hypergeometric equations.
\newblock {\em Invent. Math.}, 184(3):499--528, 2011.

\bibitem[RS15]{RS14}
Jean-Pierre Ramis and Jacques Sauloy.
\newblock The {$q$}-analogue of the wild fundamental group and the inverse
  problem of the {G}alois theory of {$q$}-difference equations.
\newblock {\em Ann. Sci. \'Ec. Norm. Sup\'er. (4)}, 48(1):171--226, 2015.

\bibitem[RSZ13]{RSZ}
Jean-Pierre Ramis, Jacques Sauloy, and Changgui Zhang.
\newblock Local analytic classification of {$q$}-difference equations.
\newblock {\em Ast\'erisque}, (355):vi+151, 2013.

\bibitem[RZ02]{RZ}
Jean-Pierre Ramis and Changgui Zhang.
\newblock D\'eveloppement asymptotique {$q$}-{G}evrey et fonction th\^eta de
  {J}acobi.
\newblock {\em C. R. Math. Acad. Sci. Paris}, 335(11):899--902, 2002.

\bibitem[Sak01]{Sak}
Hidetaka Sakai.
\newblock Rational surfaces associated with affine root systems and geometry of
  the {P}ainlev\'e equations.
\newblock {\em Comm. Math. Phys.}, 220(1):165--229, 2001.

\bibitem[Sau00]{S00}
Jacques Sauloy.
\newblock Syst\`emes aux {$q$}-diff\'erences singuliers r\'eguliers:
  classification, matrice de connexion et monodromie.
\newblock {\em Ann. Inst. Fourier (Grenoble)}, 50(4):1021--1071, 2000.

\bibitem[Sau06]{S06}
Jacques Sauloy.
\newblock Isomonodromy for complex linear {$q$}-difference equations.
\newblock In {\em Th\'eories asymptotiques et \'equations de {P}ainlev\'e},
  volume~14 of {\em S\'emin. Congr.}, pages 249--280. Soc. Math. France, Paris,
  2006.

\bibitem[Sau15]{S15}
Jacques Sauloy.
\newblock Espace des donn\'ees de monodromie pour les \'equations aux
  {$q$}-diff\'erences.
\newblock {\em Preprint}, 2015.

\bibitem[Trj33]{Trj}
Waldemar~Joseph Trjitzinsky.
\newblock Analytic theory of linear {$q$}-difference equations.
\newblock {\em Acta Math.}, 61(1):1--38, 1933.

\bibitem[vdPR07]{vdPR}
Marius van~der Put and Marc Reversat.
\newblock Galois theory of {$q$}-difference equations.
\newblock {\em Ann. Fac. Sci. Toulouse Math. (6)}, 16(3):665--718, 2007.

\bibitem[Zha02]{Z02}
Changgui Zhang.
\newblock Une sommation discr\`ete pour des \'equations aux {$q$}-diff\'erences
  lin\'eaires et \`a coefficients analytiques: th\'eorie g\'en\'erale et
  exemples.
\newblock In {\em Differential equations and the {S}tokes phenomenon}, pages
  309--329. World Sci. Publ., River Edge, NJ, 2002.

\end{thebibliography}
\end{document}